\documentclass[a4paper, 10pt, conference]{ieeeconf}      

\IEEEoverridecommandlockouts                              
\usepackage{amssymb}

\usepackage[pdftex]{graphicx}
\usepackage{tikz}
\usepackage{amsmath}
\usepackage{framed}
\usepackage{bm}
\usepackage{cite}
\usepackage{comment}
\usepackage{algorithmic}
\usepackage{algorithm}
\usepackage{url}
\usepackage{booktabs}
\usepackage{array}

\usetikzlibrary{intersections,calc,arrows.meta}

\usepackage[sort]{cleveref}
\usepackage{mathtools}
\usepackage{subcaption}
\usepackage{multirow}
\usepackage{threeparttable}

\usepackage{pgfplots}

\usepackage{mathtools}
\mathtoolsset{showonlyrefs=true} 

\usepackage{amsfonts}
\usepackage{color}
\usepackage{cases}
\usepackage{enumerate}

\newtheorem{theorem}{Theorem}
\newtheorem{assumption}{Assumption}

\newtheorem{remark}{Remark}
\newtheorem{lemma}{Lemma}
\newtheorem{example}{Example}

\newtheorem{proposition}{Proposition}

\usepackage{blkarray}

\DeclareMathOperator{\diag}{diag}
\DeclareMathOperator{\tr}{tr}

\newcommand{\e}{\mathrm{e}}

\newcommand{\D}{\mathrm{d}}

\newcommand{\Real}{\mathbb{R}}

\newcommand{\bb}{\mathbb}

\def\BibTeX{{\rm B\kern-.05em{\sc i\kern-.025em b}\kern-.08em
    T\kern-.1667em\lower.7ex\hbox{E}\kern-.125emX}}

\title{\LARGE \bf Relationship Between Controllability Scoring and\\ Optimal Experimental Design}

\author{Kazuhiro Sato
\thanks{This work was supported by JST PRESTO, Japan, Grant Number JPMJPR25K4. }
\thanks{K. Sato is with Department of Mathematical Informatics, Graduate School of Information Science and Technology, The University of Tokyo, Tokyo 113-8656, Japan.
        {\tt\small kazuhiro@mist.i.u-tokyo.ac.jp}}
}
\begin{document}

\maketitle
\thispagestyle{empty}
\pagestyle{empty}

\begin{abstract}
Controllability scores provide control-theoretic centrality measures that quantify the relative importance of state nodes in networked dynamical systems. We establish a structural connection between finite-time controllability scoring and approximate optimal experimental design (OED): the finite-time controllability Gramian decomposes additively across nodes, yielding an affine matrix model of the same form as the information-matrix model in OED.
This yields a direct correspondence between the volumetric controllability score (VCS) and D-optimality, and between the average energy controllability score (AECS) and A-optimality, implying that the classical D/A invariance gap has a direct analogue in controllability scoring.
By contrast, we point out that controllability scoring generically admits a unique optimizer,
unlike approximate-OED formulations.
Finally, we uncover a long-horizon phenomenon with no OED counterpart: source-like state nodes without a negative self-loop can be increasingly downweighted by AECS as the horizon grows.
Two numerical examples corroborate this long-horizon downweighting behavior.
\end{abstract}


\section{Introduction}

Assessing the importance of individual state nodes in a dynamical network is a recurring need in control and network science \cite{bof2016role,liu2011controllability, pasqualetti2014controllability, summers2015submodularity}. Such assessments inform where to allocate actuation effort, how to prioritize interventions, and how to interpret large-scale models whose state variables may represent heterogeneous quantities. Yet, for many networked systems, it remains challenging to produce node-level importance scores that are both physically meaningful and robust to modeling conventions such as coordinate choices or unit rescalings.
Moreover, if a large-scale network is actuated through a single node, standard Gramian-based controllability metrics can deteriorate rapidly with the system size, making them numerically unreliable even when the system is controllable \cite{baggio2024controllability}. This issue affects node-ranking approaches that evaluate each node in isolation using a single actuation model, as in \cite[Section~III-G]{summers2015submodularity}. 

Motivated by this challenge, in our prior work~\cite{sato2022controllability}, we introduced a controllability scoring framework based on a virtual actuation viewpoint and proposed two node-allocation scores: the volumetric controllability score (VCS) and the average-energy controllability score (AECS). The core idea is to introduce a one-to-one correspondence between state nodes and virtual input channels, distribute a fixed actuation resource across these channels, and interpret the resulting optimal allocation over the probability simplex as a normalized node score.
Subsequent work established uniqueness properties and demonstrated applicability
to human brain networks~\cite{sato2025uniqueness}. The formulation was also
extended to accommodate constraints on actuated nodes~\cite{Sato2025}.

The present paper builds on this framework and addresses two conceptual and practical questions:
\begin{itemize}
    \item How does controllability scoring relate to classical approximate optimal experimental design (OED) \cite{harman2024polytope,huan2024optimal, pukelsheim2006optimal,jones2021optimal}, whose modern formulation dates back to the 1950s \cite{elfving1952optimum, kiefer1959optimum}?

    \item  How does the network structure encoded in the system matrix govern controllability scores over long time horizons?
\end{itemize}
The second question goes beyond the time-horizon dependence examined in
\cite[Section~III-C]{sato2025uniqueness}, where the role of the network structure was not explicitly analyzed.
Together, these questions clarify which aspects of controllability scoring can be explained through approximate OED and which aspects arise from the dynamics of networked systems.

\begin{table}[t]
\centering
\caption{Correspondence between controllability scoring and approximate OED, together with the resulting coordinate invariance properties.}
\label{tab:oed_cs_visual_simple}
\renewcommand{\arraystretch}{1.15}
\setlength{\tabcolsep}{5pt}
\begin{tabular}{@{}lll@{}}
\toprule
\textbf{OED} & \textbf{Controllability Score} & \textbf{Coordinate invariance} \\
\midrule
D-optimal design & VCS  & Invariant \\
A-optimal design & AECS & Generally dependent \\
\bottomrule
\end{tabular}
\end{table}

The contributions of this paper are summarized as follows:
\begin{enumerate}[i)]
    \item 
    We clarify the connection between controllability scoring and approximate OED.
    Under the virtual actuation model, the finite-horizon controllability Gramian admits an additive decomposition across state nodes, yielding an affine matrix model that is structurally identical to the information-matrix model in approximate OED.
    This establishes a direct correspondence (Table~\ref{tab:oed_cs_visual_simple}): VCS corresponds to D-optimality, whereas AECS corresponds to A-optimality.
    Motivated by this analogy, we further show an invariance gap under state-coordinate changes:
VCS is invariant under any nonsingular transformation, while AECS generally depends on the chosen coordinates (and thus can vary with unit changes).

    \item 
    We highlight a fundamental contrast between approximate OED and controllability scoring.
    Optimal allocations in approximate OED need not be unique~\cite{harman2024polytope},
    whereas controllability-scoring allocations are generically unique
~\cite{sato2022controllability,sato2025uniqueness}.
    This distinction is crucial because controllability scoring interprets the optimizer itself as centrality-like scores.
      Moreover, unlike the approximate OED framework, controllability scoring involves an intrinsic time-horizon parameter $T$ through the finite-time controllability Gramian, and the resulting $T\to\infty$ limit is a distinctive issue specific to controllability scores.

    \item 
    We provide a structural understanding of how the system matrix (i.e., the network coupling pattern) shapes the AECS allocation  over long horizons.
In particular, we identify source-like nodes, namely nodes that influence other nodes but receive little
or no incoming influence, without negative self-loops as a key architectural feature of the system matrix that can systematically drive AECS to downweight the corresponding nodes as the horizon grows. This mechanism is intrinsic to controllability scoring with no direct analogue in approximate OED. We corroborate this behavior in numerical experiments.

\end{enumerate}
Overall, the novelty of this paper lies in revealing the optimization structure underlying controllability scoring, rather than in proposing another controllability-based centrality index. 
This perspective clarifies the design-theoretic interpretation of VCS and AECS, the role of optimizer uniqueness, and the long-horizon influence of network dynamics on AECS allocations.

The remainder of the paper is organized as follows.
We review controllability scoring in Section~\ref{sec:Prelim}.
In Section~\ref{sec:oed}, we discuss the similarities and differences between controllability scoring and approximate OED.
Section~\ref{sec:source_node_mechanism} analyzes the long-horizon AECS
allocation and identifies the source-node mechanism.
Numerical examples are provided in Section~\ref{sec:examples}.
Finally, Section~\ref{Sec:conclusion} concludes the paper.

{\bf Notation:}
Let $\bb{N}$, $\bb{R}$, and $\bb{C}$ denote the sets of natural numbers, real numbers, and complex numbers, respectively.
We denote the imaginary unit by ${\rm i}$.
For a matrix $X$, $X^\top$ denotes the transpose.
For $X\in\mathbb C^{m\times n}$, the Hermitian transpose is defined by
$X^\dagger := (\overline{X})^{\top}$,
where $\overline{X}$ denotes the elementwise complex conjugate of $X$.
For a square matrix $X$, $\tr(X)$, $\det(X)$, and $\exp(X)$ denote the trace, determinant, and matrix exponential of $X$, respectively.
We write $X\succeq 0$ (resp., $X\succ 0$) to denote that $X$ is Hermitian positive semidefinite (resp., positive definite).
For any matrix $M\in\mathbb{C}^{n\times n}$ and index sets $I,J\subseteq\{1,\ldots,n\}$,
we denote by $M_{I,\,J}$ the submatrix with rows in $I$ and columns in $J$.
When $I=\{i\}$ and $J=\{j\}$, we simply write $M_{ij}$ instead of $M_{I, J}$;
when $I=\{i\}$ (resp., $J=\{j\}$), $M_{i,\,J}$ (resp., $M_{I,\,j}$) denotes the corresponding row (resp., column) submatrix.
For a vector $x$, $x_i$ denotes its $i$th component.
The identity matrix of size $n$ is denoted by $I_n$.
We also write $I$ for an identity matrix of appropriate size when there is no ambiguity.
The symbol $\bm{1}$ denotes all-ones vector in $\Real^n$, and $e_i$ is the $i$th standard basis vector.
The symbol $\diag(a_1,\ldots, a_n)$ denotes the diagonal matrix with diagonal entries $a_1,\dots,a_n$.
For $k\in\mathbb{N}$, let
    $\Delta_k := \left\{ x\in\Real^k\, \middle|\, \sum_{i=1}^k x_i = 1,\ x_i \ge 0 \right\}$.

\section{Controllability Scores} \label{sec:Prelim}

This section recalls the controllability-scoring framework \cite{sato2022controllability}.
We start from an autonomous linear-time invariant (LTI) network system
\begin{align}
    \dot x(t) = Ax(t), \label{eq:autonomous}
\end{align}
where $x(t)=\begin{pmatrix}
    x_1(t) &\cdots & x_n(t)
\end{pmatrix}^\top\in\Real^n$ and $x_i(t)$ denotes the state of node $i$.
The matrix $A\in\Real^{n\times n}$ encodes weighted interactions among nodes.
For autonomous dynamics \eqref{eq:autonomous}, the notion of controllability is not directly applicable because there is no physical input channel.
Therefore, following the virtual-actuation viewpoint, we introduce virtual input channels at all state nodes and use them to quantify
where to intervene and how strongly.

\subsection{Virtual actuation model and the corresponding controllability Gramian}

To define controllability scores, we augment \eqref{eq:autonomous} with virtual inputs and consider
the following virtually actuated system:
\begin{align}
  \dot x(t) = Ax(t) + B(p)u(t),\label{eq:lti}
\end{align}
where $B(p) := \diag(\sqrt{p_1},\ldots,\sqrt{p_n})$.
This setting establishes a one-to-one correspondence between each state node $x_i$ and a virtual input channel $u_i$.
For any horizon $T>0$, the finite-time controllability Gramian of \eqref{eq:lti} is defined as
\begin{align}
    W(p, T)
     &:= \int_0^T \exp(At) B(p)B(p)^\top \exp(A^\top t)\, \D t  \nonumber\\
     &= \sum_{i=1}^n p_i W_i(T), \label{Def_Wc}
\end{align}
where
    $W_i(T) := \int_0^T \exp(At) e_ie_i^\top \exp(A^\top t)\, \D t$.
Thus, $W_i(T)$ quantifies the controllability contribution of the single virtual channel acting on node $i$ over $[0,T]$,
and $W(p,T)$ is a weighted sum of $W_1(T),\ldots, W_n(T)$.

\subsection{Finite-time controllability scoring problem}

We define controllability scores as optimal allocations of the virtual actuation budget:
\begin{framed}
\vspace{-1em}
 \begin{align}
\label{prob:FTCSP}
    \begin{aligned}
        &&& \text{minimize} && h_T(p) \\
        &&& \text{subject to} && p \in X_T\cap \Delta_n.
    \end{aligned}
\end{align}
\vspace{-1em}
\end{framed}
\noindent
Here, $X_T$ is defined as
    $X_T := \{p\in \Real^n \mid W(p,T)\succ O\}$.
The constraint $p\in X_T$ ensures that virtual system \eqref{eq:lti} is controllable.
Moreover, $p \in \Delta_n$ encodes relative importance under a fixed budget.
The objective function $h_T(p)$ is chosen as either
\begin{align}
    f_T(p) &:= -\log\det W(p, T), \label{def_fT}\\
    g_T(p) &:= \tr \!\left(W(p, T)^{-1}\right). \label{def_gT}
\end{align}
When $h_T=f_T$, any optimizer of Problem~\eqref{prob:FTCSP} is called a volumetric controllability score (VCS);
when $h_T=g_T$, any optimizer of Problem~\eqref{prob:FTCSP} is called an average-energy controllability score (AECS).
Intuitively, VCS promotes enlarging the controllability ellipsoid,
whereas AECS promotes reducing the average control energy.
In both cases, the optimizer is interpreted as a node-level centrality score:
a larger $p_i$ assigns more virtual actuation authority (i.e., a larger share of the intervention budget) to node $i$, indicating that intervening at that node is expected to be more effective in influencing the network dynamics over $[0,T]$.

\subsection{Algorithm for solving Problem~\eqref{prob:FTCSP}}

Problem~\eqref{prob:FTCSP} can be solved  using a projected-gradient method with projections onto the simplex $\Delta_n$.
We refer to \cite{sato2022controllability, sato2025uniqueness} for algorithmic details and convergence guarantees.

\begin{remark}
        Unlike classical control problems, controllability scoring is motivated by networked systems (e.g., brain or social networks) where designing and applying continuous control inputs is often infeasible. Interventions are instead implemented through high-level actions such as policies, resource allocations, or localized treatments, rather than engineered actuators. Accordingly, the goal of controllability scoring is not controller synthesis but intervention planning: identifying where limited intervention capability should be placed to most effectively influence the network dynamics.
\end{remark}

\section{Relation Between Problem~\eqref{prob:FTCSP} and Optimal Experimental Design} \label{sec:oed}

This section clarifies the relationship between controllability scoring
and classical optimal experimental design (OED) \cite{pukelsheim2006optimal}.

\subsection{Approximate OED} \label{subsec_OED}

We briefly recall the classical setting of approximate OED in linear regression; see, e.g., \cite{harman2024polytope}.
Consider the model
\begin{align}
    y = F\theta + \varepsilon,\qquad
    y\in\Real^N,\ \theta\in\Real^d,
\end{align}
where the $i$th row of $F$ is the regression vector $f(x_i)^\top$
associated with an experimental condition $x_i$
and $\varepsilon$ is a zero-mean Gaussian noise vector with covariance $\sigma^2 I_N$.
Suppose that there are $m$ distinct conditions $x_1,\dots,x_m$
and that condition $x_i$ is used $k_i$ times, so that
$N = \sum_{i=1}^m k_i$ and $w_i := k_i/N$ defines the empirical design
weights $w\in\Delta_m$.

The least-squares estimator $\hat\theta = (F^\top F)^{-1}F^\top y$ is
unbiased with covariance
    $\sigma^2 (F^\top F)^{-1}$.
The Fisher information matrix is therefore
\[
    M(w)
    := \frac{1}{\sigma^2}F^\top F
    = \frac{N}{\sigma^2}\sum_{i=1}^m w_i f(x_i)f(x_i)^\top
    = \sum_{i=1}^m w_i M_i,
\]
where $M_i := \frac{N}{\sigma^2} f(x_i)f(x_i)^\top$ are the elementary
information matrices.

Based on the information matrix $M(w)$,
approximate OED problems can be formulated as
\begin{framed}
\vspace{-1.3em}
 \begin{align}
\label{prob:OED}
    \begin{aligned}
        &&& \text{minimize} && \gamma(w) \\
        &&& \text{subject to} && w \in \Delta_m.
    \end{aligned}
\end{align}
\vspace{-1.3em}
\end{framed}

\noindent
Here, the objective function $\gamma(w)$ is chosen as either
\begin{align}
    \alpha(w) := -\log\det M(w), \quad
    \beta(w) := \tr \!\left(M(w)^{-1}\right).
\end{align}
When $\gamma=\alpha$, any minimizer $w^*$ is called a D-optimal design;
when $\gamma=\beta$, any minimizer $w^*$ is called an A-optimal design.

Problem~\eqref{prob:OED} is directly comparable to Problem~\eqref{prob:FTCSP}; see Table~\ref{tab:oed_cs_correspondence} for the correspondence.
Unlike Problem~\eqref{prob:FTCSP}, where $p\in X_T$ explicitly enforces $W(p,T)\succ O$, Problem~\eqref{prob:OED} is often written only with the simplex constraint $w\in\Delta_m$.
This correspondence is structural in the sense that both problems
optimize the same convex criteria over an affine matrix model. However,
the meanings of the weights are different: \(p_i\) allocates virtual
actuation authority to state node \(i\) and is interpreted as a
controllability-based centrality score, whereas \(w_i\) specifies the
relative frequency of using the experimental condition \(x_i\) and is
interpreted as an experimental design. Hence \(W_i(T)\) and \(M_i\)
are algebraically analogous but semantically different objects.

\begin{table}[t]
\centering
\caption{Structural correspondence between Problem~\eqref{prob:FTCSP} and Problem~\eqref{prob:OED}.}
\label{tab:oed_cs_correspondence}
\renewcommand{\arraystretch}{1.15}
\setlength{\tabcolsep}{5pt}
\begin{tabular}{@{}ll@{}}
\toprule
Problem~\eqref{prob:FTCSP} & Problem~\eqref{prob:OED} \\
\midrule
$p$ & $w$ \\
$W_i(T)$ & $M_i$  \\
$f_T(p)$ & $\alpha(w)$ \\
$g_T(p)$ & $\beta(w)$ \\
\bottomrule
\end{tabular}
\end{table}

\subsection{Invariance of VCS and coordinate dependence of AECS} \label{sec:scaling}

It is well known in OED \cite{pukelsheim2006optimal} that D-optimality is invariant under nonsingular reparameterizations, whereas A-optimality generally depends on the chosen coordinates. As summarized in Table~\ref{tab:oed_cs_visual_simple}, we show that the same phenomenon arises for controllability scoring under state-coordinate changes: VCS is invariant, while AECS is not in general.

Consider the state-coordinate change (including rescalings)
\begin{align}
    x = S\tilde x, \label{eq:state_scaling}
\end{align}
where $S\in {\bb C}^{n\times n}$ is any nonsingular (possibly complex-valued) matrix.
Then,
virtual system \eqref{eq:lti} is rewritten as
    $\dot{\tilde x}(t)
    = \tilde A \tilde x(t) + \tilde B(p)u(t)$, 
where $\tilde A := S^{-1}AS$ and $\tilde B(p):=S^{-1}B(p)$.
The corresponding finite-time controllability Gramian satisfies the standard
congruence relation
\begin{align}
    \widetilde W(p,T) = S^{-1}W(p,T)S^{-\dagger}. \label{eq:Gramian_congruence}
\end{align}
This implies that
    $\widetilde W(p,T)\succ O \quad \Leftrightarrow\quad W(p,T)\succ O$.
Thus, the feasibility of Problem~\eqref{prob:FTCSP} is unaffected by coordinate-change \eqref{eq:state_scaling}.

Accordingly, the counterpart of Problem~\eqref{prob:FTCSP} under \eqref{eq:state_scaling} is formulated as

\begin{framed}
\vspace{-1.3em}
 \begin{align}
\label{prob:FTCSP2}
    \begin{aligned}
        &&& \text{minimize} && \widetilde{h}_T(p) \\
        &&& \text{subject to} && p \in X_T\cap \Delta_n.
    \end{aligned}
\end{align}
\vspace{-1.3em}
\end{framed}
\noindent
The objective function $\widetilde{h}_T(p)$ is chosen as either
\begin{align}
    \widetilde{f}_T(p) &:= -\log\det \widetilde{W}(p, T), \label{def_fT2}\\
    \widetilde{g}_T(p) &:= \tr \!\left( \widetilde{W}(p, T)^{-1}\right). \label{def_gT2}
\end{align}

The following proposition shows that VCS is invariant
under nonsingular state-coordinate transformations, familiar from D-optimal design.
Consequently, the VCS-induced node scoring is robust
against coordinate changes.

\begin{proposition}[VCS invariance] \label{prop:VCS_invariance}
Let $A\in\mathbb{R}^{n\times n}$ be arbitrary and let $S\in\mathbb{C}^{n\times n}$
be any nonsingular matrix.
Then, for almost all $T>0$, the optimal solutions
$p_{\rm VCS}$ of Problem~\eqref{prob:FTCSP} with $h_T=f_T$
and $\widetilde p_{\rm VCS}$ of Problem~\eqref{prob:FTCSP2}
with $\widetilde h_T=\widetilde f_T$ are unique and satisfy
\begin{align}
    \widetilde p_{\rm VCS} = p_{\rm VCS}. \label{eq:VCS_invariance_statement}
\end{align}
\end{proposition}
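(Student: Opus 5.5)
The plan has three steps: show the two objectives differ by a constant, conclude that the two problems have the same minimizers, and then obtain uniqueness from the earlier work.

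\textbf{Step 1: the objectives differ by a constant.} By the congruence relation \eqref{eq:Gramian_congruence}, for every $p\in X_T$ we have
\begin{align}
\det \widetilde W(p,T) = \det(S^{-1})\,\det W(p,T)\,\det(S^{-\dagger}) = |\det S|^{-2}\det W(p,T).
\end{align}
Since $W(p,T)\succ O$, both determinants are positive reals. Taking logarithms gives
\begin{align}
\widetilde f_T(p) = f_T(p) + 2\log|\det S| \quad \text{for all } p\in X_T .
\end{align}
Because $S$ is nonsingular, $2\log|\det S|$ is a finite constant independent of $p$.

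\textbf{Step 2: same feasible set, same minimizers.} We already observed that $\widetilde W(p,T)\succ O$ if and only if $W(p,T)\succ O$. Hence Problems~\eqref{prob:FTCSP} and \eqref{prob:FTCSP2} have the same feasible set $X_T\cap\Delta_n$. Two objectives that differ by an additive constant on a common feasible set have identical sets of minimizers. So any VCS of the original problem is a VCS of the transformed problem, and conversely. This step holds for every $T>0$, with no genericity assumption.

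\textbf{Step 3: uniqueness for almost all $T$.} The ``almost all $T$'' qualifier is needed only for uniqueness, and there the hardest part is borrowing the right earlier result. I would invoke the uniqueness theorem for VCS from \cite{sato2022controllability,sato2025uniqueness}. As I recall, it states that for any $A\in\Real^{n\times n}$ the VCS of Problem~\eqref{prob:FTCSP} exists and is unique for almost every $T>0$. Existence is nontrivial because $X_T\cap\Delta_n$ is not closed; it follows since $f_T\to\infty$ at the boundary of $X_T$. The genericity argument rests on analyticity in $T$ of the Gramians $W_i(T)$ and the strict convexity of $-\log\det$ restricted to a suitable direction set.

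Let $\mathcal T$ be the full-measure set of horizons on which this uniqueness holds for $A$. By Step 2, the minimizer set of \eqref{prob:FTCSP2} equals that of \eqref{prob:FTCSP} for every $T$, so it is a singleton exactly for $T\in\mathcal T$. There is therefore no need to re-establish uniqueness for the possibly complex $\widetilde A=S^{-1}AS$. This matters, because the cited theorem is stated for real $A$, and routing the transformed problem through Step 2 avoids that issue. It follows that $\widetilde p_{\rm VCS}=p_{\rm VCS}$ for all $T\in\mathcal T$, which proves \eqref{eq:VCS_invariance_statement}.
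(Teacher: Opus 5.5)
Your proposal is correct and follows essentially the same route as the paper: the paper's proof also derives $\widetilde f_T(p)=f_T(p)+2\log|\det S|$ from the congruence relation, concludes that the two problems share the same optimal solution set, and imports uniqueness for almost all $T>0$ from \cite[Theorem~1]{sato2025uniqueness}. Your explicit point that uniqueness for the transformed problem follows from this common solution set, rather than from applying the cited theorem to the possibly complex $S^{-1}AS$, is what the paper's argument does implicitly.
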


\begin{proof}
It follows from \eqref{eq:Gramian_congruence} and \eqref{def_fT2} that
    $\widetilde{f}_T(p) 
    = -\log\det \left(S^{-1}W(p,T)S^{-\dagger}\right)
    = f_T(p) + 2\log|\det S|$.
Since the second term is independent of \(p\), the two problems have the same
optimal solution set.  Moreover, by \cite[Theorem~1]{sato2025uniqueness}, the
optimal solution is unique for almost all \(T>0\).  Hence, \eqref{eq:VCS_invariance_statement} holds for almost all $T>0$.
\end{proof}

In contrast, the following example shows that
AECS can be sensitive to a state-coordinate change.
In heterogeneous networks, the sensitivity is not a minor technicality:
it can alter node rankings even when the underlying dynamics are unchanged.

\begin{example}
Let $n=2$, $A=0$, and fix any $T>0$.
Then 
    $W(p,T)=\int_0^T B(p)B(p)^\top \D t
    =T{\rm diag}(p_1,p_2)$.
Thus, the constraint $p\in X_T$ yields
    ${\rm tr} \left(W(p,T)^{-1}\right)
    =\frac{1}{T}\left(\frac{1}{p_1}+\frac{1}{p_2}\right)$.
The Lagrange multiplier method implies that 
\begin{align}
(p_1, p_2) = (1/2, 1/2) \label{AECS_ex}    
\end{align}
is the optimal solution, i.e., AECS.

Next, consider the diagonal scaling, i.e., a state-coordinate change, $x=S\tilde x$ with
$S=\diag(s_1,s_2)$ where $s_1>0$ and $s_2>0$.
Then, \eqref{eq:Gramian_congruence} yields
    $\widetilde W(p,T)
    =T\,\diag\!\left(\frac{p_1}{s_1^2},\frac{p_2}{s_2^2}\right)$,
and thus if $p\in X_T$, we have
${\rm tr} \left(\widetilde W(p,T)^{-1}\right)
    =\frac{1}{T}\left(\frac{s_1^2}{p_1}+\frac{s_2^2}{p_2}\right)$,
and the Lagrange multiplier method implies that
    $(p_1, p_2) = \left(\frac{s_1}{s_1+s_2}, \frac{s_2}{s_1+s_2}\right)$,
which differs from \eqref{AECS_ex} whenever $s_1\neq s_2$.
Thus, the AECS optimizer is not invariant under nonsingular
state-coordinate changes in general. 
\end{example}

\begin{remark}[On transpose vs.\ Hermitian transpose]
\label{rem:Hermitian_Gramian}
In the real-valued setting, the finite-time controllability Gramian $W(p,T)$ is defined as
\eqref{Def_Wc}.
However, if we allow complex-valued state-coordinate change \eqref{eq:state_scaling}, then \eqref{Def_Wc} must be interpreted
with the transpose replaced by the Hermitian transpose:
\begin{align}
    W(p,T)=\int_{0}^{T} \exp(At)B(p)B^{\dagger}(p)\exp(A^{\dagger}t)\, \D t,
\end{align}
so that $W(p,T)$ is Hermitian and positive semidefinite, consistent with the
standard energy inner product $\langle z,w\rangle=z^{\dagger}w$ on
$\mathbb C^{n}$.
This distinction is essential: keeping $^{\top}$ while allowing complex $S$
can destroy positive definiteness under a coordinate change.
For instance, for any $W\succ 0$ and $S={\rm i}I$,
$S^{-1}WS^{-\top}=(-{\rm i})W(-{\rm i})=-W \prec 0$,
whereas
$S^{-1}WS^{-\dagger}=(-{\rm i})W({\rm i})=W\succ 0$.
\end{remark}

\begin{remark}
Proposition~\ref{prop:VCS_invariance} concerns invariance with respect to
a change of state representation.  The weight \(p_i\) is interpreted as
the weight assigned to the \(i\)th original virtual input channel, or to
the \(i\)th original state node before the coordinate transformation.
For a general nonsingular matrix \(S\), the transformed coordinates
\(\tilde x_i\) may be linear combinations of the original states and
therefore need not define the same nodes.  Hence, the proposition does
not assert invariance of scores for newly defined nodes in the transformed
coordinates; it asserts invariance of the VCS weights assigned to the
original input channels under different state representations.

By contrast, AECS can change even under diagonal state rescalings, where
the node correspondence is preserved.  Thus, AECS may depend on the
chosen state coordinates even when the underlying dynamics are unchanged.
\end{remark}

\subsection{Key differences from approximate OED}

Although Problem~\eqref{prob:FTCSP} and Problem~\eqref{prob:OED} are identical in optimization form up to the extra feasibility constraint $p\in X_T$ in Problem~\eqref{prob:FTCSP}, they differ in several important respects.
These differences are crucial because the optimizer in controllability scoring
is interpreted as a node-level centrality score, rather than merely as an
optimal allocation of experimental effort.

\subsubsection{Consequences of the semantic difference} \label{sec_different_semantics}

The semantic difference described after Table~\ref{tab:oed_cs_correspondence}
has concrete implications. In controllability scoring, \(W_i(T)\) is
generated by a virtual input at state node \(i\) and its propagation
through \(\exp(At)\). Therefore, the optimizer of
Problem~\eqref{prob:FTCSP} depends on both the system matrix \(A\) and
the horizon \(T\), and can reflect structural properties of the network.

By contrast, in approximate OED, \(M_i\) represents the information
provided by candidate experimental condition \(i\), and the optimizer of
Problem~\eqref{prob:OED} specifies the relative usage frequencies of
these conditions for parameter estimation. Hence, the analogy is
structural and does not imply that the two optimizers have the same
interpretation.

\subsubsection{Uniqueness}

In Problem~\eqref{prob:OED}, the parameter dimension $d$ is often moderate, while
the number of candidate design points $m$ can be large. Each $M_i$ is a
symmetric $d\times d$ matrix and therefore lies in the $d(d+1)/2$--dimensional
vector space of symmetric matrices. It is thus common for $\{M_i\}_{i=1}^m$ to
be linearly dependent. Consequently, the set of optimal approximate designs is
often not a single point but a nontrivial polytope of optimal weights, as
analyzed in \cite{harman2024polytope}. In particular, D- or A-optimal designs
are typically not unique.

In controllability scoring, we instead aggregate the finite-time
controllability contributions $W_i(T)$ associated with the
virtual inputs at the state nodes. Here, the number of design variables equals
the state dimension $n$, and the
matrices $\{W_i(T)\}_{i=1}^n$ are generically linearly independent. Consequently, the optimal allocations defining VCS and AECS are typically unique \cite[Theorem~1]{sato2025uniqueness}, which is crucial for interpreting them as centrality-like scores for the state nodes.

\subsubsection{Intrinsic time horizon $T$ and the limit $T\to\infty$}

Unlike Problem~\eqref{prob:OED}, controllability scoring problem~\eqref{prob:FTCSP} involves an intrinsic time-horizon parameter $T>0$ through finite-time controllability Gramian $W(p,T)$ defined by \eqref{Def_Wc}.
The limit $T\to\infty$ is a new issue in controllability scoring,
as it removes dependence on a user-chosen terminal time and thereby improves reproducibility.
However, this limit cannot be addressed by the standard Gramian-based approach when $A$ is non-Hurwitz:
the controllability Gramian $W(p,T)$ diverges as $T\to\infty$, and even for large finite $T$ it can become severely ill-conditioned,
making the objective functions numerically unstable and rendering many Gramian-based centralities ill-defined on an infinite horizon.

Recent work~\cite{umezu2026controllability} resolves this obstacle by introducing a scaled controllability Gramian.
Formally, the scaled Gramian is obtained from
the finite-horizon Gramian by the congruence transformation
$\widetilde W(p,T):=D(T)^{-1}Q^{-1}W(p, T)Q^{-\dagger}D(T)^{-\dagger}$,
where \(Q\) brings \(A\) into its Jordan normal form and \(D(T)\) is a block-diagonal,
time-dependent scaling.  Thus, it has the same algebraic form as a
state-coordinate transformation with \(S=QD(T)\), although the scaling depends
on the terminal time \(T\).

For VCS, the congruence transformation changes the log-determinant objective
only by a \(p\)-independent term, so the scaled finite-horizon problem is
equivalent to the original one.  For AECS, the equivalent scaled formulation
uses
 ${\rm tr}\left(
    Q^{-\dagger}D(T)^{-\dagger}\widetilde W(p,T)^{-1}D(T)^{-1}Q^{-1}
    \right)$,
which equals the original objective \({\rm tr}(W(p,T)^{-1})\), rather than
\({\rm tr}(\widetilde W(p,T)^{-1})\).  Thus, the scaled AECS formulation preserves the original finite-horizon objective and, at the same time, makes explicit the asymptotic structure that governs its behavior as
\(T\to\infty\).

With these formulations, the scaled VCS problem is equivalent to the original
finite-horizon VCS problem up to an additive constant independent of $p$, and
the scaled AECS formulation is exactly equivalent to the original finite-horizon
AECS problem. Moreover, the scaled Gramian admits a well-defined limit as
$T\to\infty$, which makes it possible to define infinite-horizon VCS and AECS
by taking suitable limits of the corresponding scaled formulations.


\section{Why AECS downweights source-like nodes on long horizons}
\label{sec:source_node_mechanism}

This section provides a structural explanation for why AECS can yield node rankings that differ qualitatively from those induced by VCS on long horizons, as reported in \cite[Section~III-C-3]{sato2025uniqueness}.
The key mechanism is the presence of source-like state nodes, i.e.,
coordinates that are weakly affected by the rest of the network.
For such nodes, a diagonal entry of the finite-time controllability Gramian $W(p,T)$ defined by \eqref{Def_Wc}
admits an explicit form, leading to sharp lower bounds for AECS and revealing
a characteristic contrast to the VCS behavior on long horizons.
Importantly, this mechanism is specific to controllability scoring (Problem~\ref{prob:FTCSP}) and has no direct counterpart in classical OED (Problem~\ref{prob:OED}), as mentioned in Section~III-C-1.

\subsection{Explicit formula for the diagonal Gramian entry under a left-eigenvector condition}
\label{subsec:source_explicit_Wii}

We start from a structural condition that captures ``source-like" behavior
at the level of the system matrix $A$.

\begin{assumption}[Left-eigenvector condition]\label{ass:left_eig_source}
Fix an index $i\in\{1,\dots,n\}$.
Assume that there exists $\ell_i\in\mathbb{R}$ such that
\begin{equation}\label{eq:left_eig_source_assump}
    e_i^\top A = \ell_i\, e_i^\top,
\end{equation}
i.e., $e_i$ is a left eigenvector of $A$.
\end{assumption}

Assumption~\ref{ass:left_eig_source} means that the $i$th row of $A$ has no off-diagonal entries:
$A_{ij}=0$ for all $j\neq i$.
In particular, the scalar $\ell_i$ in \eqref{eq:left_eig_source_assump} is simply the diagonal entry,
$\ell_i = A_{ii}$.
Thus, $x_i$ is not driven by other state nodes.
In this sense, $x_i$ behaves as a source-like node, and the sign of $\ell_i$ specifies whether it has a nonnegative self-loop weight.

\begin{lemma} \label{Lem_important}
Suppose that Assumption~\ref{ass:left_eig_source} holds. Then, the $(i,i)$ entry of $W(p,T)$ is given by
\begin{equation}\label{eq:Wii_general}
    (W(p,T))_{ii}
    = p_i\omega_i(T),
\end{equation}
where 
\begin{align}
        \omega_i(T)=
    \begin{cases}
        T & {\rm if}\quad\ell_i=0,\\[2mm]
        \dfrac{\e^{2 \ell_iT}-1}{2 \ell_i} &{\rm if}\quad \ell_i\neq 0.
    \end{cases} \label{def_omega}
\end{align}
Moreover, if $p\in X_T$, then $p_i>0$.
\end{lemma}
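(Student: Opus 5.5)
The plan is to compute $(W(p,T))_{ii}=e_i^\top W(p,T)e_i$ directly from the integral definition \eqref{Def_Wc}, using Assumption~\ref{ass:left_eig_source} to collapse the row $e_i^\top\exp(At)$.

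\textbf{Step 1 (row of the exponential).} From $e_i^\top A=\ell_i e_i^\top$, induction gives $e_i^\top A^k=\ell_i^k e_i^\top$ for all $k\in\mathbb N$. Summing the exponential series then yields
\[
e_i^\top\exp(At)=\e^{\ell_i t}e_i^\top .
\]
Transposing, and using that $A$ and $\ell_i$ are real, gives $\exp(A^\top t)e_i=\e^{\ell_i t}e_i$.

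\textbf{Step 2 (diagonal entry).} Substitute into the integral:
\[
e_i^\top W(p,T)e_i=\int_0^T \e^{2\ell_i t}\,e_i^\top B(p)B(p)^\top e_i\,\D t .
\]
Since $B(p)B(p)^\top=\diag(p_1,\dots,p_n)$, the integrand is $p_i\e^{2\ell_i t}$. Evaluating $\int_0^T\e^{2\ell_i t}\,\D t$ gives $T$ when $\ell_i=0$ and $(\e^{2\ell_iT}-1)/(2\ell_i)$ otherwise, which is \eqref{def_omega}.

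\textbf{Step 3 (positivity).} If $p\in X_T$, then $W(p,T)\succ O$, so its diagonal entry $p_i\omega_i(T)$ is strictly positive. In both cases $\omega_i(T)>0$ for $T>0$: when $\ell_i\neq0$, the numerator $\e^{2\ell_iT}-1$ and the denominator $2\ell_i$ have the same sign. Hence $p_i>0$.

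The only point needing care is Step 1: justifying termwise multiplication of the exponential series by $e_i^\top$ (absolute convergence suffices) and correctly passing from the left-eigenvector relation to the transposed factor. Everything else is routine. There is no real obstacle, although I would note the sign check on $\omega_i$ explicitly.
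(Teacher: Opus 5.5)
Your proposal is correct and follows essentially the same route as the paper: collapse $e_i^\top\exp(At)$ to $\e^{\ell_i t}e_i^\top$ via the left-eigenvector condition, integrate to obtain $p_i\omega_i(T)$, and then use positivity of the diagonal entries of $W(p,T)\succ O$ together with $\omega_i(T)>0$. Your explicit justification of the series step and the sign check on $\omega_i(T)$ are details the paper leaves implicit.
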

\begin{proof}
By definition \eqref{Def_Wc},
    \begin{align}
        (W(p,T))_{ii}
    = e_i^\top W(p,T)e_i 
    = p_i \int_0^T \e^{2 \ell_i t}\, \D t.
    \end{align}
The second equality follows from Assumption~\ref{ass:left_eig_source}, which implies $e_i^\top \exp (At) = \e^{\ell_i t}\, e_i^\top$. 
Thus, \eqref{eq:Wii_general} holds. 
Moreover, if $p\in X_T$, then $W(p,T)\succ O$ and hence $(W(p,T))_{ii}>0$. Since $\omega_i(T)>0$, it follows from \eqref{eq:Wii_general} that $p_i>0$.
\end{proof}

\subsection{AECS lower bound}
\label{subsec:source_AECS_bound}

We provide a quantitative lower bound for the AECS objective function
$g_T(p)$ in Problem~\eqref{prob:FTCSP}.

\begin{theorem}\label{prop:AECS_lower_source}
Suppose that $p\in X_T$ and Assumption~\ref{ass:left_eig_source} hold. Then,
\begin{align}
    g_T(p) \geq \frac{1}{p_i\,\omega_i(T)},\label{eq:AECS_lower_source}
\end{align}
where $\omega_i$ is defined by \eqref{def_omega}.
\end{theorem}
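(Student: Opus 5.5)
The plan is to bound the trace of $W(p,T)^{-1}$ from below by a single diagonal entry of the inverse, and then bound that entry using the diagonal entry of $W(p,T)$ computed in Lemma~\ref{Lem_important}.

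Write $W := W(p,T)$. Since $p\in X_T$, we have $W\succ O$, so $W^{-1}\succ O$. Every diagonal entry of $W^{-1}$ is therefore positive, and
\[
g_T(p)=\tr(W^{-1})=\sum_{j=1}^n (W^{-1})_{jj}\ \ge\ (W^{-1})_{ii}.
\]

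The key step is the standard inequality $(W^{-1})_{ii}\ge 1/W_{ii}$ for any positive definite $W$. I would prove it via Cauchy--Schwarz in the inner product induced by $W$:
\[
1=(e_i^\top e_i)^2=\bigl(e_i^\top W^{1/2}\,W^{-1/2}e_i\bigr)^2\le (e_i^\top W e_i)(e_i^\top W^{-1}e_i)=W_{ii}\,(W^{-1})_{ii}.
\]
Alternatively, the Schur complement formula gives $(W^{-1})_{ii}=1/(W_{ii}-W_{i,J}W_{J,J}^{-1}W_{J,i})\ge 1/W_{ii}$ with $J=\{1,\dots,n\}\setminus\{i\}$, since the subtracted term is nonnegative.

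Finally, by Lemma~\ref{Lem_important}, $W_{ii}=p_i\omega_i(T)$. The same lemma gives $p_i>0$ for $p\in X_T$, and $\omega_i(T)>0$ holds in both cases of \eqref{def_omega}, so the reciprocal is well defined. Chaining the three steps gives
\[
g_T(p)\ge (W^{-1})_{ii}\ge \frac{1}{W_{ii}}=\frac{1}{p_i\,\omega_i(T)},
\]
which is \eqref{eq:AECS_lower_source}.

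No step here is a real obstacle. The only points needing care are checking positivity, so that the reciprocal makes sense and dropping the other trace terms is legitimate, and justifying $(W^{-1})_{ii}W_{ii}\ge 1$ cleanly. Both follow from $W\succ O$.
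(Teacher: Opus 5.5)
Your proof is correct and follows the paper's own chain: $g_T(p)\ge (W^{-1})_{ii}\ge 1/W_{ii}=1/(p_i\,\omega_i(T))$, using Lemma~\ref{Lem_important} for the diagonal entry. The only minor difference is how you justify $(W^{-1})_{ii}\ge 1/W_{ii}$: you use Cauchy--Schwarz (or the inverse Schur-complement formula), while the paper's Lemma~\ref{lem:inv_diag_bound} uses the adjugate formula together with the determinant bound of Lemma~\ref{lem:VCS}.
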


\begin{proof}
By definition \eqref{def_gT} of $g_T$,
    $g_T(p) \geq \left(W(p,T)^{-1}\right)_{ii} 
    \geq \frac{1}{\left(W(p,T)\right)_{ii}}$.
The first inequality follows from the definition of the trace.
The second inequality follows from Lemma~\ref{lem:inv_diag_bound} in Appendix~\ref{Ape}.
Thus, Lemma~\ref{Lem_important} yields \eqref{eq:AECS_lower_source}.
\end{proof}

A particularly revealing case is $\ell_i=0$, for which $\omega_i(T)=T$ in
\eqref{eq:Wii_general}. Then, \eqref{eq:AECS_lower_source} becomes
\begin{equation}\label{eq:AECS_lower_1_over_pT}
    g_T(p)\ \ge\ \frac{1}{p_i\,T}.
\end{equation}
Thus, for any fixed $p_i>0$, this lower bound decays as $T^{-1}$ as $T\to\infty$.
Equivalently, the lower bound becomes less restrictive on long horizons,
which helps explain why the AECS optimizer may allocate a very small weight to
a source-like node when $T$ is large.

In contrast, if $\ell_i<0$,
\eqref{eq:AECS_lower_source} yields the non-decaying bound
\begin{align}
    g_T(p)\ \geq \frac{-2\ell_i}{p_i\,(1-{\rm e}^{2\ell_i T})} \to \frac{-2\ell_i}{p_i}\quad (T\to\infty). \label{nondecay}
\end{align}
Hence, unlike the marginal case $\ell_i=0$, strictly stable self-dynamics remove the horizon-induced relaxation of the lower bound and thus discourage the AECS optimizer from taking $p_i$ extremely small.
In fact, \eqref{nondecay} shows that making $p_i$ very small forces $g_T(p)$ to be large, so such allocations are penalized by the AECS objective.

\begin{remark}
    If $\ell_i>0$,
\eqref{eq:AECS_lower_source} yields
    $g_T(p) \geq \frac{2\ell_i}{p_i\,({\rm e}^{2\ell_i T}-1)}$.
As $T\to\infty$, the right-hand side decays exponentially.
Thus, the lower bound becomes even less restrictive than in the marginal case
$\ell_i=0$.
Accordingly, this bound alone does not prevent the AECS optimizer from taking
$p_i$ very small on long horizons.
\end{remark}

\begin{remark}[Nearly source-like nodes]
Assumption~\ref{ass:left_eig_source} is imposed to obtain the exact identity
\eqref{eq:Wii_general}. If the \(i\)th row of \(A\) has small
off-diagonal entries, this identity no longer holds exactly. Nevertheless,
for every fixed \(T\), the Gramian \(W(p,T)\) depends continuously on
\(A\), so small incoming couplings lead to small perturbations of the
AECS objective on finite horizons. Thus, the downweighting mechanism is
expected to persist qualitatively for nearly source-like nodes, although
we do not claim a uniform perturbation result as \(T\to\infty\).
\end{remark}

\begin{remark}[Geometric and energy interpretation]
The AECS downweighting effect can be interpreted through the eigenvalues
of the controllability Gramian. Let
\(0<\lambda_1(p,T)\leq \cdots \leq \lambda_n(p,T)\) be the eigenvalues
of \(W(p,T)\). Since
$g_T(p)= \sum_{k=1}^n \frac{1}{\lambda_k(p,T)}$,
small eigenvalues contribute disproportionately through their reciprocals.
Thus,
decreasing \(g_T(p)\) mainly requires enlarging the small eigenvalues of
\(W(p,T)\), which correspond to directions in which the reachable ellipsoid has small
semi-axis lengths and hence require large control energy. 

This suggests a
possible mechanism behind AECS downweighting: for a source-like node with
weak self-dynamics, its Gramian contribution can become large over a long
horizon, but if it mainly enlarges already well-reachable directions,
additional weight on that node may yield only a small marginal decrease
in \(g_T(p)\). Other nodes can be more effective when their contributions
enlarge poorly reachable directions, corresponding to smaller eigenvalues
of \(W(p,T)\).
\end{remark}

\subsection{VCS lower bound}
\label{subsec:det_decomp_exact_source}

Unlike the AECS objective function $g_T$,
the VCS objective $f_T$ does not reduce to a bound in terms of
$p_i\omega_i(T)$ alone:

\begin{theorem}
Suppose that $p\in X_T$ and Assumption~\ref{ass:left_eig_source} hold. Then,
\begin{align}
    f_T(p)
    \geq -\log (p_i\omega_i(T))
    -\log\det \left(W(p,T)\right)_{\bar i,\,\bar i}, \label{eq:f_decomp_exact}
\end{align}    
where  $\bar i := \{1,\dots,n\}\setminus\{i\}$ and
$\omega_i$ is defined in \eqref{def_omega}.
\end{theorem}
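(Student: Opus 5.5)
The plan is to combine Fischer's inequality with the diagonal formula from Lemma~\ref{Lem_important}. The key fact is that, for a Hermitian positive definite matrix partitioned into blocks, the determinant is at most the product of the determinants of the diagonal blocks. Since $p\in X_T$, we have $W(p,T)\succ O$.

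Permute rows and columns symmetrically so that index $i$ comes first; this does not change the determinant. Partition the permuted matrix as
\begin{align}
W(p,T)\simeq\begin{pmatrix} (W(p,T))_{ii} & (W(p,T))_{i,\,\bar i}\\ (W(p,T))_{\bar i,\,i} & (W(p,T))_{\bar i,\,\bar i}\end{pmatrix}.
\end{align}
The Schur complement formula then gives
\begin{align}
\det W(p,T) = \det\left(W(p,T)\right)_{\bar i,\,\bar i}\cdot\left((W(p,T))_{ii} - (W(p,T))_{i,\,\bar i}\left(W(p,T)\right)_{\bar i,\,\bar i}^{-1}(W(p,T))_{\bar i,\,i}\right).
\end{align}
The principal submatrix $(W(p,T))_{\bar i,\bar i}$ is positive definite, so the subtracted quadratic term is nonnegative. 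Hence $\det W(p,T)\le (W(p,T))_{ii}\,\det(W(p,T))_{\bar i,\bar i}$.

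Next, Lemma~\ref{Lem_important} gives $(W(p,T))_{ii}=p_i\omega_i(T)$, and this quantity is positive because $p_i>0$ and $\omega_i(T)>0$. Both factors on the right-hand side are therefore positive. Taking $-\log$, which is monotonically decreasing, yields \eqref{eq:f_decomp_exact}.

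There is no real obstacle here. The only points needing care are the positivity of each factor, so that the logarithms are well defined, and the justification that principal submatrices of a positive definite matrix are positive definite. The Schur complement, or equivalently Fischer's inequality, is standard. If one prefers to mirror the appendix, it can be phrased as $\left(W^{-1}\right)_{ii} = 1/(\text{Schur complement}) \ge 1/W_{ii}$, in the same spirit as Lemma~\ref{lem:inv_diag_bound}, combined with $\det W=\det W_{\bar i,\bar i}/(W^{-1})_{ii}$.
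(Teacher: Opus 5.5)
Your proof is correct and follows essentially the paper's route: derive $\det W(p,T)\le (W(p,T))_{ii}\det\left(W(p,T)\right)_{\bar i,\,\bar i}$ from a Schur complement (the paper's Lemma~\ref{lem:VCS}), substitute Lemma~\ref{Lem_important}, and take $-\log$. The only difference is the block you eliminate. You take the Schur complement of the block $W_{\bar i,\,\bar i}$, so the final comparison is between scalars. The paper eliminates $W_{ii}$ instead, and then needs the Loewner monotonicity of the determinant to compare the resulting $(n-1)\times(n-1)$ Schur complement with $W_{\bar i,\,\bar i}$, so your variant is slightly more elementary.
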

\begin{proof}
Since $p\in X_T$, $W(p,T)\succ 0$.
Thus, Lemma~\ref{lem:VCS} in Appendix~\ref{Ape} implies that
\begin{equation}\label{eq:det_schur_exact}
    \det W(p,T)
    \leq (W(p,T))_{ii} \det\left(\left(W(p,T)\right)_{\bar i,\,\bar i}\right).
\end{equation}
Thus, Lemma~\ref{Lem_important} yields \eqref{eq:f_decomp_exact}.
\end{proof}

The bound~\eqref{eq:f_decomp_exact} makes explicit that the lower bound of
$f_T$ depends on $p_i$ not only through $-\log(p_i\omega_i(T))$ but also through the term
$-\log\det \left(W(p,T)\right)_{\bar i,\,\bar i}$,
which depends on the overall allocation $p$.
Consequently, Assumption~\ref{ass:left_eig_source} alone does not justify the conclusion that
choosing $p_i$ small is beneficial for VCS on long horizons.


\section{Examples}\label{sec:examples}

This section illustrates the qualitative discrepancy between VCS and AECS by two examples.
We first present a minimal two-node model, for which the finite-time Gramian admits a closed-form expression.
We then revisit the $n=10$ directed network used in our earlier numerical study~\cite{sato2025uniqueness} to demonstrate that the same phenomenon appears in a larger network and to examine how it changes when a negative self-loop is added to a source-like node.

\subsection{A closed-form two-node diagonal example}\label{subsec:ex_two_node_diag}

We consider the $2\times 2$ diagonal dynamics \eqref{eq:lti} with
   $A=\begin{pmatrix}
        0 & 0\\
        0 & -1
    \end{pmatrix}$.
Both nodes are source-like in the sense of Assumption~\ref{ass:left_eig_source},
since each row of $A$ has no off-diagonal entries and hence
$e_1^\top A = 0\cdot e_1^\top$, $e_2^\top A = (-1)\cdot e_2^\top$. In particular, node~1 is marginal ($\ell_1=0$), whereas node~2 is strictly stable ($\ell_2=-1$).
For this system, the finite-time controllability Gramian $W(p,T)$ defined in \eqref{Def_Wc} is given by 
    $W(p,T) =
    \begin{bmatrix}
        p_1T & 0\\
        0 & p_2\,\zeta(T)
    \end{bmatrix}$,
    $\zeta(T) :=\int_0^T e^{-2t}\, \D t=\frac{1-e^{-2T}}{2}$.
Note that $p=(p_1,p_2)\in X_T\cap\Delta_2$ implies that $0<p_1, p_2 < 1$.

\subsubsection{VCS}
Since the matrix $A$ is symmetric, \cite[Theorem~2]{sato2025uniqueness} implies that
the unique VCS optimizer is
    $(p_1^{\rm VCS}(T), p_2^{\rm VCS}(T))=(1/2, 1/2)$
for all $T>0$.

\subsubsection{AECS}
By \eqref{def_gT}, for $p=(p_1,p_2)\in X_T\cap\Delta_2$, we have
    $g_T(p)=\frac{1}{Tp_1}+\frac{1}{p_2\zeta(T)}$.
Thus, the Lagrange multiplier method yields the unique AECS optimizer
\begin{align}
    p_{1}^{\rm AECS}(T) &= \frac{1}{1+\sqrt{\dfrac{2T}{1-\e^{-2T}}}},\\ 
    p_2^{\rm AECS}(T) &= 1-p_1^{\rm AECS}(T).
\end{align}
In particular, $p_{1}^{\rm AECS}(T)\to 0$ as $T\to\infty$.
This illustrates, in the simplest closed form, how a source-like node with a nonnegative self-loop can be
assigned vanishing weight in the AECS objective function $g_T$ on long horizons,
in contrast to the VCS objective function $f_T$.

\subsection{$10$-node directed network examples}\label{subsec:ex_ten_node}

We next consider the directed network in Fig.~\ref{fig:network}, originally studied in~\cite[Section~III-C-3]{sato2025uniqueness}, to illustrate how the long-horizon behavior of the scores depends on the presence of self-loops at source-like nodes.

\subsubsection{Source-like nodes without self-loops}

The network in Fig.~\ref{fig:network} consists of $n=10$ nodes and all edges have a uniform weight
$c=0.2$.
Let $L$ denote the (directed) graph Laplacian associated with this network and
set $A=-L$.

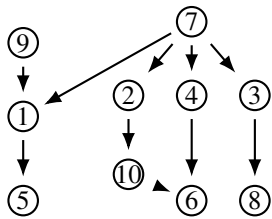
\begin{figure}[htbp]
    \centering
    \begin{tikzpicture}[scale=0.9]
        \node (n9) at (0.0, 3.2) {9};
        \node (n1) at (0.0, 1.8) {1};
        \node (n5) at (0.0, 0.2) {5};

        \node (n7)  at (3.2, 3.6) {7};
        \node (n2)  at (2.0, 2.2) {2};
        \node (n4)  at (3.2, 2.2) {4};
        \node (n3)  at (4.4, 2.2) {3};

        \node (n10) at (2.0, 0.7) {10};
        \node (n6)  at (3.2, 0.2) {6};
        \node (n8)  at (4.4, 0.2) {8};

        \foreach \v in {n1,n2,n3,n4,n5,n6,n7,n8,n9,n10}
            \draw[thick] (\v) circle[radius=0.28];

        \draw[-Latex,thick,shorten >=2pt,shorten <=2pt] (n1)--(n5);
        \draw[-Latex,thick,shorten >=2pt,shorten <=2pt] (n2)--(n10);
        \draw[-Latex,thick,shorten >=2pt,shorten <=2pt] (n3)--(n8);
        \draw[-Latex,thick,shorten >=2pt,shorten <=2pt] (n4)--(n6);

        \draw[-Latex,thick,shorten >=2pt,shorten <=2pt] (n7)--(n1);
        \draw[-Latex,thick,shorten >=2pt,shorten <=2pt] (n7)--(n2);
        \draw[-Latex,thick,shorten >=2pt,shorten <=2pt] (n7)--(n3);
        \draw[-Latex,thick,shorten >=2pt,shorten <=2pt] (n7)--(n4);

        \draw[-Latex,thick,shorten >=2pt,shorten <=2pt] (n9)--(n1);
        \draw[-Latex,thick,shorten >=2pt,shorten <=2pt] (n10)--(n6);
    \end{tikzpicture}
    \caption{Network employed in the numerical experiments.}
    \label{fig:network}
\end{figure}

This network contains two source-like nodes (nodes~7 and~9) in the sense that they have no incoming edges, and hence their corresponding rows of $A=-L$ have no off-diagonal entries.
In particular, both nodes satisfy Assumption~\ref{ass:left_eig_source} with $\ell_i=A_{ii}=0$.
The marginal case $\ell_i=0$ yields the lower bound
\eqref{eq:AECS_lower_1_over_pT}.

Table~\ref{tab:combined} shows that this mechanism manifests sharply for node~9:
its AECS allocation drops from $9.1\times 10^{-2}$ at $T=1$ to $7.0\times 10^{-3}$ at $T=1000$ and further to $2.3\times 10^{-3}$ at $T=10000$.
By contrast, the AECS allocation of node~7 remains bounded away from zero even for $T=10000$.
This disparity is consistent with the network topology.
Although both nodes are source-like in the sense of having no incoming edges, node~7 has multiple outgoing edges (it directly influences nodes~1--4), so allocating actuation budget to node~7 improves the reachability of several downstream states and continues to reduce the average control energy captured by $g_T(p)$.
Node~9, in contrast, has a more limited downstream reach in this topology (it primarily feeds into node~1 and then node~5).
Since the virtual-actuation model allows direct allocation to other nodes, comparable downstream improvements can often be achieved by allocating budget elsewhere, which is consistent with the strong downweighting of node~9 under AECS for large $T$.

VCS exhibits the opposite behavior on long horizons.
While the allocations are nearly uniform for very short horizons ($T=0.01$), VCS increasingly concentrates weight on upstream (source-like) nodes as $T$ increases, assigning substantial mass to nodes~7 and~9 (approximately $0.249$ and $0.166$, respectively, at $T=10000$).
This contrast can be traced back to the distinct analytic structures of the two objectives.
In fact, the VCS objective function $f_T$ does not admit an analogous reduction to a bound depending only on $p_i\omega_i(T)$ as in \eqref{eq:AECS_lower_1_over_pT}:
Under Assumption~\ref{ass:left_eig_source}, \eqref{eq:f_decomp_exact} shows that the influence of $p_i$ on $f_T$ remains coupled to the overall allocation through the principal submatrix determinant term.
As a result, even when a node is source-like, VCS need not favor driving its allocation toward zero on long horizons, which is consistent with the non-negligible VCS weights observed for nodes~7 and~9 in Table~\ref{tab:combined}.

\begin{table*}[t]
\centering
\caption{Evaluation of VCS and AECS for various $T$ reported in \cite[Table III]{sato2025uniqueness}.}
\label{tab:combined}
\begin{minipage}[b]{0.48\textwidth}
\centering
\textbf{VCS}\\[0.5ex]
\begin{tabular}{c c c c c}
\toprule
\textbf{Node} & \textbf{$T=0.01$} & \textbf{$T=1$} & \textbf{$T=1000$} & \textbf{$T=10000$} \\
\midrule
1  & 0.1000 & 0.0997 & 0.0733 & 0.0733 \\
2  & 0.1000 & 0.1000 & 0.1011 & 0.1011 \\
3  & 0.1000 & 0.1000 & 0.1088 & 0.1087 \\
4  & 0.1000 & 0.1000 & 0.0864 & 0.0864 \\
5  & 0.1000 & 0.0997 & 0.0456 & 0.0450 \\
6  & 0.1000 & 0.0994 & 0.0607 & 0.0607 \\
7  & 0.1000 & 0.1013 & 0.2493 & 0.2495 \\
8  & 0.1000 & 0.0997 & 0.0423 & 0.0422 \\
9  & 0.1000 & 0.1003 & 0.1661 & 0.1667 \\
10 & 0.1000 & 0.1000 & 0.0664 & 0.0663 \\
\bottomrule
\end{tabular}
\end{minipage}%
\hfill
\begin{minipage}[b]{0.48\textwidth}
\centering
\textbf{AECS}\\[0.5ex]
\begin{tabular}{c c c c c}
\toprule
\textbf{Node} & \textbf{$T=0.01$} & \textbf{$T=1$} & \textbf{$T=1000$} & \textbf{$T=10000$} \\
\midrule
1  & 0.1000 & 0.1093 & 0.1713 & 0.1728 \\
2  & 0.1000 & 0.1000 & 0.1133 & 0.1136 \\
3  & 0.1000 & 0.1000 & 0.1205 & 0.1209 \\
4  & 0.1000 & 0.1000 & 0.1058 & 0.1061 \\
5  & 0.1000 & 0.0998 & 0.0907 & 0.0923 \\
6  & 0.1000 & 0.1091 & 0.1335 & 0.1338 \\
7  & 0.1000 & 0.0913 & 0.0926 & 0.0928 \\
8  & 0.1000 & 0.0998 & 0.0695 & 0.0694 \\
9  & 0.1000 & 0.0908 & 0.0070 & 0.0023 \\
10 & 0.1000 & 0.1000 & 0.0957 & 0.0959 \\
\bottomrule
\end{tabular}
\end{minipage}
\end{table*}

\subsubsection{Source-like nodes with negative self-loops}

We next consider a self-loop variant of the network in Fig.~\ref{fig:network}.
The underlying directed topology and all off-diagonal edge weights remain unchanged; the only modification is the addition of a negative self-loop at node~9:
We replaced the $(9,9)$ entry by $A_{99}=-1$ while keeping $A_{ij}$ unchanged for all $(i,j)\neq(9,9)$.

\begin{table*}[t]
\centering
\caption{Evaluation of VCS and AECS for various $T$ on the $10$-node directed network in Fig.~\ref{fig:network}, where the only modification from the no-self-loop case is a negative self-loop of weight $-1$ added at node~9 (i.e., $A_{99}=-1$).}
\label{tab:combined_selfloop9}
\begin{minipage}[b]{0.48\textwidth}
\centering
\textbf{VCS}\\[0.5ex]
\begin{tabular}{c c c c c}
\toprule
\textbf{Node} & \textbf{$T=0.01$} & \textbf{$T=1$} & \textbf{$T=1000$} & \textbf{$T=10000$} \\
\midrule
1  & 0.1000 & 0.0997 & 0.0974 & 0.0974 \\
2  & 0.1000 & 0.1000 & 0.1020 & 0.1020 \\
3  & 0.1000 & 0.1000 & 0.1096 & 0.1096 \\
4  & 0.1000 & 0.1000 & 0.0874 & 0.0874 \\
5  & 0.1000 & 0.0997 & 0.0837 & 0.0837 \\
6  & 0.1000 & 0.0993 & 0.0606 & 0.0605 \\
7  & 0.1000 & 0.1013 & 0.2490 & 0.2492 \\
8  & 0.1000 & 0.0997 & 0.0419 & 0.0418 \\
9  & 0.1000 & 0.1003 & 0.1022 & 0.1022 \\
10 & 0.1000 & 0.1000 & 0.0661 & 0.0661 \\
\bottomrule
\end{tabular}
\end{minipage}%
\hfill
\begin{minipage}[b]{0.48\textwidth}
\centering
\textbf{AECS}\\[0.5ex]
\begin{tabular}{c c c c c}
\toprule
\textbf{Node} & \textbf{$T=0.01$} & \textbf{$T=1$} & \textbf{$T=1000$} & \textbf{$T=10000$} \\
\midrule
1  & 0.1000 & 0.1044 & 0.1269 & 0.1269 \\
2  & 0.1000 & 0.0955 & 0.0938 & 0.0938 \\
3  & 0.1000 & 0.0955 & 0.1001 & 0.1001 \\
4  & 0.1000 & 0.0955 & 0.0872 & 0.0872 \\
5  & 0.1000 & 0.0953 & 0.0739 & 0.0739 \\
6  & 0.1000 & 0.1044 & 0.1108 & 0.1108 \\
7  & 0.0999 & 0.0870 & 0.0763 & 0.0763 \\
8  & 0.1000 & 0.0953 & 0.0570 & 0.0569 \\
9  & 0.1003 & 0.1316 & 0.1953 & 0.1953 \\
10 & 0.1000 & 0.0955 & 0.0787 & 0.0787 \\
\bottomrule
\end{tabular}
\end{minipage}
\end{table*}

Table~\ref{tab:combined_selfloop9} shows that once a negative self-loop is added at node~9, the AECS assigns the largest weight to node~9 on long horizons; $p_9^{\rm AECS}\approx 0.195$ for $T=10000$, in sharp contrast to the no-self-loop case where the same node can be strongly downweighted.
This behavior is consistent with our theoretical analysis: when a source-like node has a negative self-loop (i.e., $\ell_i<0$), the  lower bound of AECS objective function $g_T$ no longer relaxes with the horizon, but instead remains bounded away from zero as $T\to\infty$; see~\eqref{nondecay}.
Consequently, driving the corresponding allocation $p_i$ to be very small would incur a large $g_T$, and the optimizer is discouraged from assigning a near-zero weight.

\section{Conclusion} \label{Sec:conclusion}

This paper clarified the structure and interpretation of controllability scoring for networked dynamical systems.
Under a node-wise virtual actuation model, the finite-time controllability Gramian decomposes additively across state nodes, yielding an affine matrix model that matches the information-matrix model in approximate OED; consequently, VCS and AECS correspond to the D- and A-optimal design criteria, and the classical D/A coordinate-invariance gap carries over under state-coordinate changes (VCS is invariant under any nonsingular transformation, whereas AECS is not in general).
We also emphasized a key distinction from approximate OED: while OED optima need not be unique, controllability scoring typically admits a unique optimal allocation, which is essential because the optimizer itself is interpreted as a node-level importance score.
Finally, we identified a long-horizon phenomenon with no OED analogue: AECS can strongly downweight source-like nodes when their self-dynamics are weak, whereas VCS need not due to its determinant-based coupling; when source-like nodes have negative self-loops, this tendency is mitigated and AECS can behave closer to VCS. Numerical examples corroborated these trends and showed that AECS and VCS can yield qualitatively different node scores on long horizons.

These observations also clarify the practical role of controllability
scoring in large-scale network intervention and actuator placement. When
exhaustive actuator-subset search is infeasible, VCS and AECS can serve
as continuous screening criteria for identifying promising intervention
sites, with VCS emphasizing reachable-volume expansion and AECS
emphasizing average-energy reduction. The long-horizon analysis further
suggests that source-like nodes need not always be prioritized by
energy-based criteria. Thus, in practical applications, the resulting
scores should be combined with additional considerations such as actuator
costs, sparsity, robustness, and domain-specific feasibility.

\appendix
\subsection{Technical lemmas for VCS and AECS lower bounds} \label{Ape}

This appendix collects two elementary matrix inequalities that we use to derive lower bounds for $f_T$ and $g_T$.
Lemma~\ref{lem:VCS} provides an upper bound on $\det W$.
Lemma~\ref{lem:inv_diag_bound} gives a convenient lower bound on the diagonal entries of $W^{-1}$.
These inequalities are standard; see, e.g., \cite{zhang2006schur}.
For the sake of a self-contained presentation, we include short proofs here.

\begin{lemma}\label{lem:VCS}
Let $W\in\mathbb{R}^{n\times n}$ be symmetric and positive definite.
Then, for each $i\in\{1,\dots,n\}$,
\begin{equation}\label{eq:det_upper_by_diagblock}
\det W
\le W_{ii}\,\det(W_{\bar i, \bar i})
\end{equation}
holds, where  $\bar i := \{1,\dots,n\}\setminus\{i\}$.
\end{lemma}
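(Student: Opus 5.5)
We prove Lemma~\ref{lem:VCS} with the Schur complement factorization of the determinant. First, permute $W$ so that index $i$ comes first. Write $P$ for the permutation matrix exchanging $e_1$ and $e_i$, or any permutation moving $i$ to the front. Then $PWP^\top$ is again symmetric positive definite, and its determinant equals $\det W$ because $\det P^2=1$. Hence we may assume without loss of generality that
\begin{align}
W=\begin{pmatrix} W_{ii} & b^\top\\ b & C\end{pmatrix},
\end{align}
where $b:=W_{\bar i,\,i}$ and $C:=W_{\bar i,\,\bar i}$.

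Next, positive definiteness of $W$ gives $W_{ii}=e_i^\top W e_i>0$ and $C\succ 0$, since $C$ is a principal submatrix. Both facts follow from testing $W$ against vectors supported on the relevant index set. We then use the block factorization with respect to $C$:
\begin{align}
\det W=\det C\,\bigl(W_{ii}-b^\top C^{-1}b\bigr).
\end{align}
This identity follows by multiplying $W$ on the left by the unit block triangular matrix with off-diagonal block $-b^\top C^{-1}$, which does not change the determinant and zeros out the $(1,2)$ block.

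Finally, because $C\succ 0$ we have $C^{-1}\succ 0$, so $b^\top C^{-1}b\ge 0$. Therefore
\begin{align}
0<W_{ii}-b^\top C^{-1}b\le W_{ii},
\end{align}
where positivity of the left side follows from $\det W>0$ and $\det C>0$. Multiplying by $\det C>0$ yields \eqref{eq:det_upper_by_diagblock}. The only step needing care is the reduction to the block form via the permutation. It is routine, and equality holds exactly when $b=0$.
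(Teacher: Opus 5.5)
Your proof is correct, but it takes the Schur complement with respect to the other block than the paper does.

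The paper pivots on the scalar $W_{ii}$. It writes $\det W = W_{ii}\det\bigl(W_{\bar i,\bar i}-W_{\bar i,i}W_{ii}^{-1}W_{i,\bar i}\bigr)$ and then concludes from $0\prec W_{\bar i,\bar i}-W_{\bar i,i}W_{ii}^{-1}W_{i,\bar i}\preceq W_{\bar i,\bar i}$ that the determinant of this $(n-1)\times(n-1)$ Schur complement is at most $\det W_{\bar i,\bar i}$. That last step relies on monotonicity of the determinant in the Loewner order, which is standard but left unproved there.

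You eliminate with respect to $C=W_{\bar i,\bar i}$ instead, so the Schur complement is the scalar $W_{ii}-b^\top C^{-1}b$. The final comparison then reduces to the trivial $b^\top C^{-1}b\ge 0$. This needs $C$ invertible, which positive definiteness guarantees, but it avoids any matrix-monotonicity argument and gives the equality case $b=0$ immediately.

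Your route has one further benefit. By the block-inverse formula, $(W^{-1})_{ii}=(W_{ii}-b^\top C^{-1}b)^{-1}$. Hence your scalar inequality $W_{ii}-b^\top C^{-1}b\le W_{ii}$ is exactly Lemma~\ref{lem:inv_diag_bound}, so both appendix lemmas come out of the same computation.
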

\begin{proof}
Let $\Pi$ be a permutation matrix that moves the $i$th coordinate to the first position, and set
$\widetilde W := \Pi W \Pi^\top$.
Then $\widetilde W\succ 0$, $\det \widetilde W=\det W$, and $\widetilde W$ admits the block partition
$\widetilde W=
\begin{bmatrix}
W_{ii} & W_{i, \bar i}\\
W_{\bar i, i} & W_{\bar i, \bar i}
\end{bmatrix}$.
By the Schur complement formula,
\begin{align}
    \det W
=\det \widetilde W
= W_{ii}\cdot \det \left(W_{\bar i, \bar i}-W_{\bar i,  i}W_{ii}^{-1}W_{i, \bar i}\right),
\end{align}
where the Schur complement
$W_{\bar i, \bar i}-W_{\bar i,  i}W_{ii}^{-1}W_{i, \bar i}\succ 0$.
Moreover, since $W_{\bar i,  i}W_{ii}^{-1}W_{i, \bar i}\succeq 0$, we have
$W_{\bar i, \bar i}-W_{\bar i,  i}W_{ii}^{-1}W_{i, \bar i}\preceq W_{\bar i, \bar i}$,
and hence \eqref{eq:det_upper_by_diagblock}
holds. 
\end{proof}

\begin{lemma}\label{lem:inv_diag_bound}
Let $W\in\mathbb{R}^{n\times n}$ be symmetric and positive definite.
Then, for each $i\in\{1,\dots,n\}$,
        $(W^{-1})_{ii}\ \ge\ \frac{1}{W_{ii}}$.
\end{lemma}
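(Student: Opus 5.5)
The plan is to reuse the same permutation and Schur-complement setup as in the proof of Lemma~\ref{lem:VCS}, but now read off the $(i,i)$ entry of the inverse through the block-inverse formula. Let $\Pi$ be the permutation matrix that moves coordinate $i$ to the first position, and set $\widetilde W := \Pi W\Pi^\top$. Since $\Pi^{-1}=\Pi^\top$, we have $\widetilde W^{-1}=\Pi W^{-1}\Pi^\top$. Hence $(W^{-1})_{ii}$ equals the $(1,1)$ entry of $\widetilde W^{-1}$, and $\widetilde W$ has the block partition displayed in that proof.

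The key step is to use the Schur complement of the \emph{lower-right} block rather than of $W_{ii}$. Because $W\succ 0$, the principal submatrix $W_{\bar i,\bar i}$ is positive definite, hence invertible. The scalar Schur complement
\begin{align}
s := W_{ii} - W_{i,\bar i}\,W_{\bar i,\bar i}^{-1}\,W_{\bar i, i}
\end{align}
is then positive. The standard block-inverse formula gives that the $(1,1)$ entry of $\widetilde W^{-1}$ is $1/s$. Since $W_{\bar i,\bar i}^{-1}\succ 0$, the quadratic form $W_{i,\bar i}W_{\bar i,\bar i}^{-1}W_{\bar i,i}$ is nonnegative, so $0<s\le W_{ii}$. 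This yields $(W^{-1})_{ii}=1/s\ge 1/W_{ii}$.

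As an alternative or cross-check, one could apply Cauchy--Schwarz in the $W$-inner product. This gives $1=(e_i^\top e_i)^2=(e_i^\top W^{1/2}W^{-1/2}e_i)^2\le (e_i^\top W e_i)(e_i^\top W^{-1}e_i)$, which is the claim directly.

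No step is a real obstacle. The only points needing care are justifying the positivity of $s$, which follows from $\det W = \det(W_{\bar i,\bar i})\, s$ with both determinants positive, and checking that the permutation does not alter the diagonal entry of the inverse.
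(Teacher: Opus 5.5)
Your argument is correct, but it does not follow the paper's route.

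The paper writes $(W^{-1})_{ii}=\det(W_{\bar i,\bar i})/\det W$ by the adjugate formula and then invokes Lemma~\ref{lem:VCS}. That lemma's proof takes the Schur complement with respect to the scalar block $W_{ii}$. It then needs the matrix-level monotonicity $0\prec X\preceq Y\Rightarrow\det X\le\det Y$.

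You instead take the Schur complement with respect to the complementary block $W_{\bar i,\bar i}$. The complement $s$ is then a scalar, the block-inverse formula gives the exact identity $(W^{-1})_{ii}=1/s$, and the inequality reduces to nonnegativity of one quadratic form.

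The two routes are tied together by $\det W=\det(W_{\bar i,\bar i})\,s$. The paper's ratio is precisely your $1/s$, and Lemma~\ref{lem:VCS} at index $i$ is equivalent to $s\le W_{ii}$.

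Each choice has its advantage. The paper's is economical for the paper as a whole, since the single determinant inequality of Lemma~\ref{lem:VCS} feeds both the AECS and the VCS lower bounds. Yours is more self-contained and avoids determinant monotonicity. It also exposes the equality case: $(W^{-1})_{ii}=1/W_{ii}$ iff $W_{i,\bar i}=0$.

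Your Cauchy--Schwarz cross-check is the most elementary of the three arguments. It needs no partitioning or determinants. The same computation also gives $(u^\top Wu)(u^\top W^{-1}u)\ge(u^\top u)^2$ for every vector $u$, not just $u=e_i$.
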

\begin{proof}
By the adjugate formula,
    \begin{align}
        (W^{-1})_{ii}=\frac{(\operatorname{adj}(W))_{ii}}{\det W}. \label{key_lower2}
    \end{align}
The $(i,i)$-entry of the adjugate matrix $\operatorname{adj}(W)$ equals the $(i,i)$-cofactor, and hence
    $(\operatorname{adj}(W))_{ii} = (-1)^{i+i}\det(W_{\bar i,\bar i})
    = \det(W_{\bar i,\bar i})$,
where $\bar i:=\{1,\dots,n\}\setminus\{i\}$.
Therefore, \eqref{key_lower2}
and Lemma~\ref{lem:VCS} yield the conclusion. 
\end{proof}


\bibliographystyle{IEEEtran}
\bibliography{main.bib}

\end{document}